\documentclass[a4paper]{amsart}

\usepackage{amsmath, amssymb, amsthm, comment}
\usepackage{a4wide,rotating,enumitem, bbold}

\usepackage{mathabx}

\newcommand\F{\mathcal{F}}
\newcommand\1{\mathbb{1}}

\newcommand\cop{\Delta}

\usepackage[sans]{dsfont}

\theoremstyle{plain}
\newtheorem{theorem}{Theorem} 

\newtheorem{lem}[theorem]{Lemma}

\theoremstyle{remark}
\newtheorem{remark}{Remark}

\DeclareMathOperator\Irr{Irr}

\DeclareMathOperator\Mor{Mor}
\DeclareMathOperator\Tr{Tr}

\newcommand\sm{\setminus}

\newcommand{\C}{\mathbb{C}}
\newcommand{\X}{\mathbb{X}}

\newcommand{\ellinf}{\ell^\infty}

\newcommand{\inv}{^{-1}}
\newcommand\conj{\overline}

\newcommand{\G}{\mathbb{G}}
\newcommand{\Gd}{\mathbb{\Gamma}}

\newcommand{\ot}{\otimes}
\newcommand{\id}{\mathrm{id}}

\begin{document}

\title{On free actions of discrete quantum groups}
\author{Pekka Salmi}
\address{Research Unit of Mathematical Sciences, University of Oulu, PL 8000, FI-90014, Finland}
\email{pekka.salmi@oulu.fi}

\begin{abstract}
R. Ellis showed in 1960 that every discrete group acts freely on its
Stone--\v Cech compactification.
We extend this result to discrete quantum groups with low duals. 
The method of proof is different from the earlier proofs
in the classical case, using the definition of
freeness given by D. A. Ellwood
in the setting of noncommutative geometry.
\end{abstract}

\subjclass{Primary: 46L67, Secondary: 20G42, 46L55}
\date {\today}
\maketitle

\section{Introduction}

An action of a group $G$ on a set $X$ is free if
no element in $G$ except the identity $e$ has any fixed points in $X$,
i.e.\ $sx\ne x$ for every $s\in G\sm\{e\}$  and $x\in X$.
Clearly, every group acts freely on itself.
The action of a discrete group $G$ on itself extends to a continuous action
of $G$ on its Stone--\v Cech compactification $\beta G$.  
R. Ellis \cite{ellis} showed that this action is always
free, there-by showing that every discrete group acts freely
on some compact space.

D. A. Ellwood  \cite{ellwood} extended the notion of free action to the setting
of noncommutative geometry. In particular, it is applicable to
actions of locally compact quantum groups on noncommutative spaces.
Several authors have studied free actions of compact quantum groups;
see for example \cite{de-commer-yamashita,nes-yam,sch-wag}.
In particular, P. F. Baum, K. De Commer and P. M. Hajac \cite{baum-et-al} 
gave three different characterisations of free actions in the case of  
a compact quantum group acting on a compact noncommutative space.
On the other hand, free actions of discrete quantum groups  
have not garnered much attention. Recently, using a different noncommutative extension of freeness, 
B.~Anderson-Sackaney and R. Vergnioux \cite{asv} showed that the action 
of a discrete quantum group $\Gd$ on $\ellinf(\Gd)$ is 
free if and only if $\ellinf(\Gd)$ is commutative. 
Using Ellwood's definition of freeness, the situation is different, 
as we shall show.

The main result of this paper
is that a discrete quantum group with a low dual acts
freely on its universal compactification.
By universal compactification we mean the natural noncommutative
analogue of the Stone--\v Cech compactification:
if a noncommutative space is associated to a C*-algebra $A$, then
its universal compactification is associated to the 
multiplier algebra $M(A)$. 
Note that when $A = C_0(X)$ is commutative, then $M(A)  \cong  C(\beta X)$. 
The action of a discrete quantum group $\Gd$ extends naturally to
its universal compactification, that is, the noncommutative space associated to $\ellinf(\Gd)$. 
We show that this action is free under the assumption
that the discrete quantum group has a low dual. 
A discrete quantum group $\Gd$ is said to have a low dual if its 
dual $\G$, which is a compact quantum group, 
is such that the dimensions of all irreducible representations
of $\G$ are universally bounded by some constant.
In the classical case such compact groups are virtually abelian.
In the case of quantum groups, there are more diverse 
examples of compact quantum groups with bounded dimension on the
irreducible representations: as noted in \cite[Remark 1.6]{dsv}, 
examples can be constructed using bicrossed products and 
as noted in \cite{kra-sol}, examples arise also naturally in 
noncommutative geometry \cite{ban-mar, haj-mas}.
Note that J. Krajczok and P. M. So\l tan \cite{kra-sol}
proved that any  compact quantum group with this property 
is of Kac type.

\section{Notation and preliminaries}

A \emph{compact quantum group} $\G$ consists of a unital C*-algebra $C(\G)$
and a comultiplication $\cop: C(\G)\to C(\G)\ot C(\G)$
that is coassociative and satisfies the cancellation property
(see for example \cite{woronowicz:compact}, \cite{maes-van-daele:compact} 
or \cite{neshveyev-tuset:book} for details).
In the case when $C(\G)$ is commutative, the C*-algebra $C(\G)$
is isomorphic to the C*-algebra of continuous functions
on a compact group, which explains the notation.

Next we fix our notation concerning the representation theory of 
compact quantum groups. We refer the reader to Chapters 1 and 2 of 
\cite{neshveyev-tuset:book} for more details.
A \emph{representation} of a compact quantum group $\G$
on a finite-dimensional Hilbert space $H$ is an invertible operator
$u\in  B(H) \ot C(\G)$  such that
$(\id \ot \cop)u = u_{12}u_{13}$, where we use the standard leg-numbering
notation. Except for the contragredient representation, we are only interested in 
representations which are unitary.
Given a representation $u$, the underlying Hilbert space
is  denoted by $H_u$ and the identity operator on $H_u$ is denoted by $1_u$. 
If $u$ and $v$ are two representations
of $\G$, then an \emph{intertwiner} between $u$ and $v$
is a linear map $T: H_u\to H_v$ such that
\[
(T\ot 1)u = v(T\ot 1). 
\]
The set of intertwiners between $u$ and $v$ is denoted by
$\Mor(u, v)$. A representation $u$ is a
\emph{subrepresentation} of $v$, denoted by $u\subset v$, if
$\Mor(u, v)$ contains an isometry. 
The \emph{tensor product of representations} $u$ and $v$ is defined by
\[
u\ot v = u_{13}v_{23}.
\]

For a Hilbert space $H$, denote the conjugate Hilbert space by $\overline{H}$
and define $J: B(H)\to B(\overline{H})$ by 
$J(A)\overline{\xi} = \overline{A^* \xi}$.
The \emph{contragredient representation} of a representation $u$ is defined by 
$u^c = (J\ot \id)(u\inv)$. 
Following the notation of \cite[section 1.4]{neshveyev-tuset:book}, let $\rho_u$ denote the 
unique invertible positive operator in $\Mor(u, u^{cc})$ such that 
$\Tr(\cdot \rho_u) = \Tr(\cdot\rho_{u}\inv)$ on $\Mor(u, u)$, 
where $\Tr$ denotes the trace on $B(H_u)$. 
The contragredient representation is not necessarily unitary, 
but the \emph{conjugate representation} defined by 
\[
\overline{u} = (J(\rho_u)^{1/2}\ot 1)u^c (J(\rho_u)^{-1/2}\ot 1)
\]
is unitary. From now on, ``representation'' refers to a unitary
representation on a finite-dimensional Hilbert space.

A representation $u$ is \emph{irreducible} if $\Mor(u, u) = \C$.
Choose a representative from each equivalence class of irreducible
representations and denote the collection
by $\Irr(\G)$. We identify an equivalence class with its
representative. The trivial representation is denoted by $\1$. 

For $u\in \Irr(\G)$, fix an orthonormal base $\{e_i\}$ of $H_u$ and 
define 
\[
t_u = \sum_i \rho_u^{1/2} e_i\ot \conj{e_i}\qquad\text{and}\qquad
s_u = \sum_i \conj{e_i}\ot \rho_u^{-1/2} e_i.
\]
Then $t_u\in \Mor(\1, u\ot \conj{u})$, $s_u\in\Mor(\1, \conj{u}\ot u)$  and 
they satisfy the conjugate equations 
\begin{equation} \label{eq:conjugate}
(1_ u\ot s_ u^*)(t_ u\ot 1_ u) = 1_u \qquad\text{and}\qquad
(s_u^*\ot 1_{\conj u})(1_{\conj u}\ot t_u) = 1_{\conj u}.
\end{equation}
Define $j: B(H_{\conj u})\to B(H_u)$ by
\[
j(A) = \rho_u^{1/2} J\inv (A) \rho_u^{-1/2},
\]
so that 
\begin{equation} \label{eq:j}
(j(A)\ot 1_{\conj u}) t_u = (1_u\ot A) t_u
\end{equation}
for every $A\in B(H_{\conj u})$.

We will work with discrete quantum groups, which may be considered
as duals of compact quantum groups, see
\cite{woronowicz:compact, maes-van-daele:compact, neshveyev-tuset:book}. 
A discrete quantum group $\Gd$ is associated with a possibly
nonunital C*-algebra $c_0(\Gd)$ and a comultiplication
$\cop:c_0(\Gd)\to M(c_0(\Gd)\ot c_0(\Gd))$ where $M$ refers to
the multiplier algebra.
We write $\ellinf(\Gd) = M(c_0(\Gd))$. Note that
in the case of an actual discrete group $\Gamma$, the C*-algebra
$\ellinf(\Gamma)$ is isomorphic to the C*-algebra $C(\beta \Gamma)$
of continuous functions on the Stone--\v Cech compactification
of $\Gamma$. Therefore, the noncommutative space associated to 
$\ellinf(\Gd)$ may be considered as the
noncommutative analogue of the Stone--\v Cech compactification
of $\Gd$, which we call the \emph{universal compactification} of $\Gd$ 
(other compactifications being associated to the
unital C*-subalgebras $A$ of $\ellinf(\Gd)$ such that
$c_0(\Gd)\subset A \subset \ellinf(\Gd)$).

We may describe $c_0(\Gd)$ as a $c_0$-direct sum of finite-dimensional
C*-algebras:
\[
c_0(\Gd) = \bigoplus_{ u\in \Irr(\G)} B(H_ u).
\]
Then $\ellinf(\Gd)$ is isomorphic to the corresponding
direct product:
\[
\ellinf(\Gd) = \prod_{ u\in \Irr(\G)} B(H_ u).
\]
Denote the projection from $\ellinf(\Gd)$ onto $B(H_ u)$ by $p_ u$.
We may view $p_ u$ as an element of $\ellinf(\Gd)$
and $B(H_ u)\subset\ellinf(\Gd)$.
We shall say that an element $a\in \ellinf(\Gd)$ is
\emph{supported} by $I\subset \Irr(\G)$ if
$a p_u = 0$ for every $ u\notin I$.

The comultiplication of $\Gd$ is defined by 
\[
\cop(a) = \sum_{v\in \Irr(\G)}
\sum_{w\in \Irr(\G)} \sum_{T: u\subset v\ot w} T a p_u T^*
\]
where the last  sum runs through 
the isometric intertwiners $T:H_ u\to H_v\ot H_w$
in the decomposition of $v\ot w$ into
irreducible subrepresentations $ u$, accounting for multiplicities. See for example \cite{yamagami},
or for the derivation of an analogous formula, see \cite{kra-sol}. 
The counit of $\Gd$ is the linear map $\epsilon: c_0(\Gd) \to \C$
such that $a p_\1 = \epsilon(a) p_\1$ for every $a\in c_0(\Gd)$.
Note that the antipode $S$ of the
discrete quantum group $\Gd$ is defined by
$S(a)p_{u} = j(ap_{\conj u})$ where $a\in c_0(\Gd)$ is such that
$S(a)\in c_0(\Gd)$.
We will not use this fact explicitly, but will work directly with the
maps $j$ and the identity \eqref{eq:j}.

D. A. Ellwood \cite{ellwood} extended the notion of free action in
such a way that it is applicable to actions of (locally compact)
quantum groups on noncommutative spaces.
We recall this definition in the context of discrete quantum groups
acting on compact noncommutative spaces and will use this notion of 
freeness throughout the paper. A compact noncommutative
space $\X$ is a virtual object associated to a unital C*-algebra
$C(\X)$, which may 
be viewed as the C*-algebra of continuous functions on $\X$.
An action of a discrete quantum group $\Gd$ on a compact
noncommutative space $\X$ is
a unital $*$-homomorphism $\alpha: C(\X) \to M(c_0(\Gd)\ot C(\X))$
such that $(\cop\ot\id)\alpha = (\id\ot \alpha)\alpha$
and the linear span of
$\alpha(C(\X))(c_0(\Gd)\ot 1)$ is dense 
in $c_0(\Gd)\ot C(\X)$.
Following D. A. Ellwood \cite{ellwood}, an action
$\alpha$  is \emph{free} if the image of 
\[
\Phi: C(\X)\ot_{alg} C(\X) \to M(c_0(\Gd)\ot C(\X)),
\quad \Phi(x\ot y) = \alpha(x)(1\ot y),
\]
is strictly dense in $M(c_0(\Gd)\ot C(\X))$. 
Recall that the strict topology on a multiplier algebra $M(A)$
of a C*-algebra $A$ is defined by all the seminorms
$x\mapsto \|xa\|$ and $x\mapsto \|ax\|$ where $a\in A$.

In this paper, we consider the action of a discrete quantum group $\Gd$ on 
its universal compactification. This action is obtained as follows.
The comultiplication of $\Gd$ is a nondegenerate $*$-homomorphism
$\cop:c_0(\Gd)\to M(c_0(\Gd)\ot c_0(\Gd))$, so it has a
strictly continuous extension
$\cop: M(c_0(\Gd)) \to M(c_0(\Gd)\ot c_0(\Gd))$. 
Recall that $M(c_0(\Gd)) = \ellinf(\Gd)$,
and note that as normed spaces $M(c_0(\Gd)\ot c_0(\Gd))$ and
$M(c_0(\Gd)\ot \ellinf(\Gd))$ are the same, although their strict
topologies are different.  
Therefore, $\cop:\ellinf(\Gd)\to M(c_0(\Gd)\ot \ellinf(\Gd))$
defines the (left) action of $\Gd$ on its universal compactification
$\ellinf(\Gd)$.
In the case of an actual discrete group $\Gamma$, this action
is the left action of $\Gamma$ on its Stone--\v Cech compactification
$\beta \Gamma$. 

\section{A new look at the Ellis theorem}

In this section we prove the Ellis theorem that every discrete group acts freely on its Stone--\v Cech compactification using the characterisation of freeness due to Ellwood. This gives a new proof for this result 
(which is not hard to prove anyway), but the main purpose is to motivate the proof in the case of discrete quantum groups with low duals. 

The (left) action of a discrete group $\Gamma$ on its Stone--\v Cech compactification $\beta \Gamma$
is given by the comultiplication
\[
\cop : \ellinf(\Gamma) \to \ellinf(\Gamma, \ellinf(\Gamma)) \cong M(c_0(\Gamma) \ot \ellinf(\Gamma)),
\]
where $\ellinf(\Gamma, \ellinf(\Gamma))$ denotes 
the space of bounded $\ellinf(\Gamma)$-valued functions on $\Gamma$.
The strict topology on  $M(c_0(\Gamma) \ot \ellinf(\Gamma))$ coincides with the topology 
of uniform convergence on finite sets when 
$M(c_0(\Gamma) \ot \ellinf(\Gamma))$ is identified with 
$\ellinf(\Gamma, \ellinf(\Gamma))$.

The following lemma is due to Kat\v etov \cite{katetov}; see also \cite[Lemma 3.33]{hindman-strauss}. Note that 
the classical Ellis theorem follows easily from this lemma;
see for example \cite[Lemma 6.28]{hindman-strauss}
concerning the more general case of semigroups with cancellation properties.
We will extend Kat\v etov's lemma to multivalued functions in the next section and apply that in the case of discrete quantum groups.

\begin{lem}\label{lemma:katetov}
  Let $X$ be a set and $f:X\to X$ a function with no fixed points.
  Then there is a partition $X = A_1\sqcup A_2\sqcup A_3$
  such that $f(A_i)\cap A_i = \emptyset$ for every $i = 1, 2, 3$.
\end{lem}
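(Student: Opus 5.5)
The plan is to treat $f$ as a directed graph on $X$ with an edge $x\to f(x)$ and to find a proper $3$-colouring of the underlying undirected graph $G$. A partition $A_1\sqcup A_2\sqcup A_3$ satisfies $f(A_i)\cap A_i=\emptyset$ exactly when no edge $\{x,f(x)\}$ has both endpoints in the same $A_i$, which is precisely a proper colouring. Since $f$ has no fixed points, $G$ has no loops, so this reformulation makes sense. Each connected component of $G$ (a component of the functional graph) contains at most one cycle: a finite connected graph in which every vertex has out-degree one has exactly as many edges as vertices. Different components can be coloured independently, so it suffices to colour one component $C$. Using the axiom of choice, we pick the required choices simultaneously for all components.

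The construction inside a component $C$ splits into two cases.

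If $C$ contains a cycle $x_0\to x_1\to\dots\to x_{n-1}\to x_0$ with $n\ge 2$, we colour it alternately $1,2,1,2,\dots$. When $n$ is odd, we give $x_{n-1}$ colour $3$ instead; this is the only place the third set is needed. Every other point $y\in C$ reaches the cycle after finitely many steps, and we let $d(y)\ge1$ be the least $k$ with $f^k(y)$ on the cycle. We then colour $y$ by induction on $d(y)$, choosing any colour in $\{1,2\}$ different from the colour of $f(y)$. This is possible because $f(y)$ already has a colour, and at worst that colour is $3$, in which case either of $1,2$ works.

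If $C$ has no cycle, we fix a base point $x_0\in C$. For $y\in C$ there are $m,n\ge0$ with $f^m(y)=f^n(x_0)$. Acyclicity forces $m-n$ to be well defined: two such pairs with different differences would produce a periodic point, hence a cycle. We colour $y$ by the parity of $m-n$, using two colours. An edge $y\to f(y)$ changes $m$ by one and leaves $n$ unchanged, so the parity flips and the colouring is proper.

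The main obstacle is checking that these definitions really are well defined. In the acyclic case this is the claim that $m-n$ does not depend on the chosen pair. If $f^m(y)=f^n(x_0)$ and $f^{m'}(y)=f^{n'}(x_0)$, then applying $f$ further to reach common iterates gives $f^{a}(z)=f^{a+k}(z)$ for some point $z$ and some $k=(m-n)-(m'-n')$. If $k\neq0$, then $f^a(z)$ is periodic, which is a cycle, contradicting the assumption. In the cyclic case the corresponding check is that $d$ is finite and that the cycle in $C$ is unique. Once these are verified, the union over components of the sets coloured $i$ gives $A_i$, which completes the proof.
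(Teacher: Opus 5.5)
Your proof is correct, but it is organised quite differently from the argument the paper relies on. The paper does not prove this lemma itself; it cites Kat\v{e}tov and Hindman--Strauss. Its proof of Lemma~\ref{lem:multi-katetov} is adapted from that argument. By Zorn's lemma one takes a maximal partial colouring whose domain is closed under $f$ and which is proper on that domain. One then shows it is total by extending it greedily along the forward orbit $x, f(x), f^2(x),\dots$ of an uncoloured point: each new point has only a bounded number of already-coloured neighbours, so a free colour is always available. No global structure of the graph is used.

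You instead analyse the functional graph globally (components, at most one periodic orbit per component) and write down an explicit colouring. On acyclic components you colour by the parity of $m-n$. On a cycle you alternate two colours, inserting one $3$ for odd cycles, and the trees hanging off the cycle are coloured outward from it. Your route is more informative. It shows that two colours suffice on every component without an odd cycle, and that the third colour is used at most once per component. It also replaces Zorn's lemma by a choice of base point in each component. The greedy argument, on the other hand, needs only local degree bounds. That is exactly why it transfers to the multivalued setting of Lemma~\ref{lem:multi-katetov} with $2N+1$ colours, where a component may contain many cycles and there is no height or parity function; your structural argument does not extend there.

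One justification needs replacing. Counting ``as many edges as vertices'' only shows that \emph{finite} components contain a single cycle. You also assert without proof that every $y\in C$ satisfies $f^m(y)=f^n(x_0)$ for some $m,n$. Both facts hold for components of any size. Define $y\sim z$ if and only if $f^m(y)=f^n(z)$ for some $m,n\ge 0$. This is an equivalence relation: if $f^m(y)=f^n(z)$ and $f^p(z)=f^q(w)$, then $f^{m+p}(y)=f^{n+q}(w)$. It relates $x$ to $f(x)$, and any two related points are joined by a path in $G$. Hence its classes are exactly the components. This gives the existence of $m,n$ and the finiteness of $d$. It also gives uniqueness of the cycle: periodic orbits through $p$ and $q$ in the same class share the point $f^m(p)=f^n(q)$, and therefore coincide.
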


Repeated application of the preceding lemma gives the following result.

\begin{lem} \label{lemma:partition}
  Let $X$ be a set and $f_k:X\to X$, $k=1, 2, \ldots, n$
  functions with no fixed points.
  Then there is a partition $X = \bigsqcup_{i=1}^{3^n} A_i$
  such that $f_k(A_i)\cap A_i = \emptyset$ for every
  $k= 1, 2, \ldots, n$ and $i = 1, 2, \ldots, 3^n$.
\end{lem}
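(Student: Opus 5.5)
We prove the statement by induction on $n$, taking common refinements of the partitions supplied by Lemma~\ref{lemma:katetov}. For $n=1$ the statement is exactly Lemma~\ref{lemma:katetov}. (If we like, the case $n=0$ is the trivial partition into $3^0=1$ piece, and the induction can start there.)

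For the inductive step, suppose we have a partition $X=\bigsqcup_{i=1}^{3^{n-1}} B_i$ with $f_k(B_i)\cap B_i=\emptyset$ for all $k\le n-1$ and all $i$. Apply Lemma~\ref{lemma:katetov} to $f_n$, which also has no fixed points, to get $X=C_1\sqcup C_2\sqcup C_3$ with $f_n(C_j)\cap C_j=\emptyset$. Now take the common refinement, consisting of the sets $B_i\cap C_j$. There are $3^{n-1}\cdot 3=3^n$ of them, indexed by pairs $(i,j)$, which we relabel as $A_1,\dots,A_{3^n}$. Some of these may be empty, which is harmless: either allow empty pieces, or note that the statement only needs a partition indexed by $3^n$ labels.

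The key observation is that the disjointness property passes to subsets. If $A\subset B$ and $f(B)\cap B=\emptyset$, then $f(A)\cap A\subset f(B)\cap B=\emptyset$. So for $A=B_i\cap C_j$ we get $f_k(A)\cap A=\emptyset$ for $k\le n-1$ because $A\subset B_i$, and for $k=n$ because $A\subset C_j$. This completes the induction.

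There is no real obstacle here. The only points to check are that each $f_k$ satisfies the hypothesis of Lemma~\ref{lemma:katetov} separately, so it can be applied to each map in turn, and that the phrase ``partition'' is read as allowing empty pieces, so that the count is exactly $3^n$.
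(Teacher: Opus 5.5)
Your proof is correct and follows the paper's route: the paper's proof is the single sentence ``repeated application of the preceding lemma,'' i.e.\ applying Lemma~\ref{lemma:katetov} to each $f_k$ separately and taking the common refinement, using (as you note) that $f(A)\cap A=\emptyset$ passes to subsets. Your remark that some pieces may be empty, so that the count is exactly $3^n$, is a clarification the paper leaves implicit.
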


The next theorem is a reformulation of the Ellis theorem 
using the characterisation of freeness due to Ellwood.  
It is known to be true by combining the Ellis theorem with Theorem 2.9 of Ellwood \cite{ellwood}.
However, we give a direct, constructive proof of the result, which
motivates our argument in the case of discrete quantum groups.

\begin{theorem}
  Let $\Gamma$ be a discrete group. 
  The image of the map
  \[
  \Phi: (f, g) \mapsto \cop(f)(1 \ot g) :
  \ellinf(\Gamma)\ot_{alg} \ellinf(\Gamma) \to \ellinf(\Gamma, \ellinf(\Gamma))
  \]
  is dense in the topology of uniform convergence on finite sets.
\end{theorem}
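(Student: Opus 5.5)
Identify $\ellinf(\Gamma,\ellinf(\Gamma))$ with bounded functions $F(s,t)$ on $\Gamma\times\Gamma$. Then $\cop(f)(s,t)=f(st)$ and
\[
\Phi(f\ot g)(s,t) = f(st)g(t).
\]
Because the topology is uniform convergence on finite sets in the first variable $s$, it is enough to do the following. Fix a finite set $E\subset\Gamma$ and $F\in\ellinf(\Gamma,\ellinf(\Gamma))$. We want an element of the image of $\Phi$ that agrees with $F$ exactly on $E\times\Gamma$. Since the image is a linear space, it suffices to realise each function $F\cdot(\delta_{s_0}\ot 1)$ with $s_0\in E$ up to an error vanishing on $E\times\Gamma$. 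In other words, for fixed $s_0\in E$ and $h=F(s_0,\cdot)\in\ellinf(\Gamma)$, we want $G$ in the image with $G(s_0,t)=h(t)$ and $G(s,t)=0$ for $s\in E\setminus\{s_0\}$ and all $t$.

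The natural building blocks are $\Phi(\delta_e\ot g)$ after a change of variables. We have $\Phi(f\ot g)(s,t)=f(st)g(t)$. Choosing $f=1_A$ and $g=1_{s_0^{-1}A}\cdot h$ for a set $A\subset\Gamma$ gives
\[
G(s,t)=1_A(st)\,1_A(s_0t)\,h(t).
\]
For $t\in s_0^{-1}A$ we get $G(s_0,t)=h(t)$. For $s\neq s_0$, $G(s,t)$ vanishes unless both $st\in A$ and $s_0t\in A$. Put $x=s_0t$. Then $st=ss_0^{-1}x$, so we need $A$ with $f_s(A)\cap A=\emptyset$, where $f_s(x)=ss_0^{-1}x$ is left translation by $ss_0^{-1}\neq e$ and therefore has no fixed points.

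Apply Lemma \ref{lemma:partition} to the finitely many fixed-point-free maps $f_s$, $s\in E\setminus\{s_0\}$. This gives a partition $\Gamma=\bigsqcup_i A_i$ with $f_s(A_i)\cap A_i=\emptyset$ for every $i$ and every such $s$. Set
\[
G_{s_0}=\sum_i \Phi\bigl(1_{A_i}\ot 1_{s_0^{-1}A_i}h\bigr).
\]
Since the sets $s_0^{-1}A_i$ partition $\Gamma$, we get $G_{s_0}(s_0,t)=h(t)$ for all $t$. For $s\in E\setminus\{s_0\}$, each summand vanishes, because $s_0t\in A_i$ and $st\in A_i$ cannot hold simultaneously. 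Summing $G_{s_0}$ over $s_0\in E$ gives an element of the image that agrees with $F$ on $E\times\Gamma$, which proves density.

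The main (though mild) obstacle is the separation step: we need the cross terms for $s\neq s_0$ to vanish uniformly in $t$, not merely pointwise. The Kat\v etov-type partition into finitely many pieces is exactly what makes this possible with a finite sum, so that we stay in the algebraic tensor product. Everything else is bookkeeping.
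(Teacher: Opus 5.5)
Your proof is correct and is essentially the paper's argument. Both apply Lemma~\ref{lemma:partition} to the finitely many nontrivial left translations determined by the finite set of first-variable points, and both get exact agreement on $E\times\Gamma$ from the finite sum $\sum_i \cop(1_{s_0 B_i})(1\ot 1_{B_i})$, where your $B_i = s_0^{-1}A_i$. The only difference is cosmetic: you insert $h$ directly into the second leg and sum over $s_0\in E$, whereas the paper first produces $1_v\ot 1$ and then uses right multiplication by $1\ot h$ together with linearity.
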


\begin{proof}
In this proof $1_A$ denotes the characteristic function of $A\subset \Gamma$ and 
write $1_v$ for $1_{\{v\}}$ if $v\in \Gamma$; this is consistent with the earlier notation.
Denote the closure of the image of $\Phi$ by $X$. 
It is enough to show that $1_v\ot 1 \in X$, 
as then also the functions of the form $1_v \ot h$ with $h\in \ellinf(\Gamma)$ are in $X$ 
and then also the functions of the form $k\ot h$ with $k\in c_0(\Gamma)$ and  $h\in \ellinf(\Gamma)$ are in $X$.

For any finite set $F\subset \Gamma$, apply Lemma~\ref{lemma:partition}
to obtain a finite partition $\Gamma = \bigsqcup_{i\in I_F} A_i$
such that
\[
sA_i \cap A_i = \emptyset 
\]
for every $s\ne e$ in $F$ and $i\in I_F$. 
Define
\[
g_F = \sum_{i\in {I_F}} \cop(1_{vA_i})(1\ot 1_{A_i}).
\]
Then for every $s, t\in \Gamma$ we have
\[
g_F(s, t) = \sum_{i\in {I_F}} 1_{s\inv vA_i\cap A_i}(t). 
\]

Next we shall show that $g_F \to 1_v\ot 1$ uniformly on finite sets 
as $F uparrow \Gamma$. 

Let $K\subset \Gamma$ be finite and let $F$ be any finite subset of $\Gamma$
such that $K\inv v \subset F$. Then for every $s\in K$ such
that $s\ne v$, we
have
\[
g_F(s, t) = \sum_{i\in {I_F}} 1_{s\inv vA_i\cap A_i}(t) = 0 
\]
as $s\inv v\in F$ and $s\inv v \ne e$. 
On the other hand, if $s = v\in K$, then
\[
g_F(v, t) = \sum_{i\in {I_F}} 1_{s\inv vA_i\cap A_i}(t)
= \sum_{i\in {I_F}} 1_{A_i}(t) = 1.
\]
It follows that $g_F |_{K\times \Gamma} = (1_v \ot 1)|_{K\times \Gamma}$ and the
claim follows.
\end{proof}

\section{The Ellis theorem for discrete quantum groups with low duals}

In this section we prove our main result that 
a discrete quantum group with a low dual acts freely 
on its universal compactification. 
As noted in the previous section, the 
classical version of this -- the Ellis theorem -- may be proved using a combinatorial
lemma due to Kat\v etov (Lemma~\ref{lemma:katetov}).
Next we extend Kat\v etov's lemma to the context of
multivalued functions, so that we can apply it in our setting.
We denote the cardinality of a set $X$ by $|X|$.
The proof of the following lemma is
adapted from the proof of Kat\v etov's lemma presented in
\cite[Lemma 3.33]{hindman-strauss}.

\begin{lem} \label{lem:multi-katetov}
  Let $X$ be a set and $f:X\to X$ a multivalued function.
  Suppose that there is a natural number $N$ such that
 $|f(x)|\le N$ and $|f\inv(x)| \le N$ for every $x\in X$.
  Then there is a partition
  \[
  X = \bigsqcup_{i=1}^{2N+1} A_i
  \]
  such that
  \[
  y  \notin f(x) \text{ whenever } x, y \in A_i,\;  x \ne y.
  \]
\end{lem}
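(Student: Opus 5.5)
The plan is to reduce the statement to a graph-colouring problem and colour the resulting graph greedily. Define a simple graph $G$ on the vertex set $X$ by joining two distinct points $x\ne y$ with an edge whenever $y\in f(x)$ or $x\in f(y)$, that is, whenever $y\in f(x)\cup f\inv(x)$. Possible fixed points $x\in f(x)$ are simply ignored, since the conclusion only concerns pairs $x\ne y$. The neighbours of $x$ in $G$ lie in $(f(x)\cup f\inv(x))\sm\{x\}$, so by hypothesis every vertex has degree at most $|f(x)|+|f\inv(x)|\le 2N$.

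Next I will show that a graph in which every vertex has degree at most $2N$ admits a proper colouring $c: X\to\{1,\dots,2N+1\}$, meaning adjacent vertices receive different colours. The partition is then $A_i = c\inv(i)$. If $x,y\in A_i$ with $x\ne y$, then $x$ and $y$ are not adjacent, so in particular $y\notin f(x)$, which is exactly the required conclusion.

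For the colouring, $X$ may be infinite, so I will use a transfinite greedy argument. By the well-ordering theorem, fix a well-order on $X$ and define $c(x)$ by transfinite recursion: once $c$ has been defined on all predecessors of $x$, let $c(x)$ be the least element of $\{1,\dots,2N+1\}$ not used by any neighbour of $x$ that precedes $x$. There are at most $2N$ such neighbours, so at least one colour is always available. If $x$ and $y$ are adjacent and, say, $y$ precedes $x$, then by construction $c(x)\ne c(y)$, so $c$ is proper. One could equally apply Zorn's lemma to partial proper colourings ordered by extension: a maximal one must be total, because any uncoloured point has at most $2N$ coloured neighbours and can therefore be added. This mirrors the Hindman--Strauss proof of Lemma~\ref{lemma:katetov}, which likewise relies on a maximality argument.

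I do not expect a genuine obstacle. The only point needing care is that the degree bound must combine both $|f(x)|\le N$ and $|f\inv(x)|\le N$, since the relation is not symmetric while the colouring constraint is. That is why the bound is $2N+1$ rather than $N+1$.
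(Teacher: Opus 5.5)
Your proof is correct, and it organises the argument differently from the paper. The core count is the same in both: a point $y$ faces at most $|f(y)\cup f\inv(y)|\le 2N$ constraints, so one of $2N+1$ colours is always free.

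The paper follows the Hindman--Strauss proof of Kat\v etov's lemma closely.
\begin{itemize}
\item It applies Zorn's lemma to partial colourings whose domain is forward-invariant, $f(D_g)\subset D_g$.
\item It shows that a maximal colouring is total by adjoining a new point $x$ together with its whole forward orbit $\bigcup_k f^k(x)$, colouring level by level.
\item Forward-invariance guarantees that no already-coloured point has $x$ in its image, so the first step only has to avoid $f(x)$.
\end{itemize}
This device is what lets the Kat\v etov argument cope with a possibly infinite $f\inv(x)$. Here it is superfluous, because the in-degree is bounded as well.

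You instead symmetrise the relation into a graph of maximum degree $2N$ and colour it greedily along a well-order, or by Zorn on arbitrary partial proper colourings. This needs no invariance condition and no orbit-by-orbit extension. It also sidesteps details the paper leaves implicit: that the points of $f^{n+1}(x)$ are coloured one at a time, and that points already in $D_g$ or on earlier levels are not recoloured.

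The paper's version mainly buys a visible parallel with Kat\v etov's lemma. Yours is the standard bounded-degree colouring argument, and it is shorter.
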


\begin{proof}
  Consider functions $g: X\supset D_g \to \{1, 2, \ldots, 2N+1\}$
  such that
  \begin{enumerate}[label=(\roman*)]
  \item $f(D_g) \subset D_g$ (the left-hand side meaning the
    union of the sets $f(x)$ with $x\in D_g$),
  \item for every $x\in D_g$ and $y\in f(x)$, we have $g(y) \ne
    g(x)$ if $y\ne x$. 
  \end{enumerate}
  Collection of such functions is partially ordered by inclusion
  and it follows from Zorn's lemma that this collection has a maximal
  element $g$ (note that the collection is nonempty as the function
  with empty domain is in the collection). 

  We shall show that $D_g = X$ by contradiction. To this end,
  suppose that there is $x\in X\sm D_g$. 
  We shall extend $g$ to a function $h$ such that $h$ satisfies the
  conditions above and contains also $x$ in its domain.
  First, define $h = g$ on $D_g$. Then we define
  $h(x)\in \{1, 2, \ldots, 2N+1\}$ such that $h(x)\ne h(y)$ for every
  such $y\in f(x)\sm \{x\}$ that $h(y)$ is already defined.
  Then $h$ is defined on $D_g \cup \{x\}$ such that also condition
  (ii) is satisfied but we still need to extend this function to make
  sure that also (i) is satisfied. 
  
  Let $f^0(x) = \{x\}$ and for every $k = 0, 1, \ldots$, define
  \[
  f^{k+1}(x) = \bigcup \{ f(y) \mid y \in f^k(x) \}. 
  \]
  Suppose that $h$ is already well-defined on $\bigcup_{k=0}^n f^k(x)$
  and next we extend its definition to $f^{n+1} (x)$. 
  For every $y\in f^{n+1} (x)$, define
  $h(y)\in \{1, 2, \ldots, 2N+1\}$ such that
  $h(y) \ne h(w)$ for every $w\in f(y)\cup f\inv(y)$ such that
  $h(w)$ is already defined and $w\ne y$. This is possible
  since $|f(y)\cup f\inv(y)| \le 2N$ by the assumption on $f$.
  It follows that $h$ is defined on
  \[
  D_g \cup \bigcup_{k=0}^\infty f^k(x)
  \]
  and satisfies (i) and (ii), contradicting the maximality of $g$.
  Hence $D_g = X$ and we obtain the required partition by putting
  $A_i = g\inv(i)$ for every $i = 1, 2, \ldots, 2N+1$.
\end{proof}

\begin{remark}
Suppose that $f$ satisfies the assumptions of the preceding lemma and
in addition $x\notin f(x)$ for any $x$. Then the partition can be made
so that $y\notin f(x)$ whenever $x, y\in A_i$. This formulation is
closer to the Kat\v etov lemma (Lemma \ref{lemma:katetov}),
but is not applicable to our situation.
\end{remark}

Let $\G$ be a compact quantum group. For $ u, v \in \Irr(\G)$ and a finite set 
$F\subset \Irr(\G)$, write
\[
I_{ u\ot v } = \{  w \in \Irr(\G) \mid  w \subset  u\ot v  \}
\]
and
\[
I_{F\ot v } = \bigcup_{ u\in F} I_{ u\ot v }.
\]
We consider these as multisets, that is, sets that allow multiple
instances of the same element. In this case, the multiplicity 
of an element in $I_{ u\ot v }$ is the same as its multiplicity as a
subrepresentation of $ u\ot v$.  

\begin{lem} \label{lem:partition}
Let $F\subset \Irr(\G)$ be a finite set. There is a finite partition
\[
\Irr(\G) = \bigsqcup_i Y_i
\]
such that
\[
I_{F\ot v } \cap I_{F\ot v '} =\emptyset
\]
for any distinct  $v , v '\in Y_i$. 
\end{lem}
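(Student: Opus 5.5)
The plan is to reduce the statement to Lemma~\ref{lem:multi-katetov}. On $X=\Irr(\G)$ I will define the multivalued function
\[
f(v) = \{ v'\in\Irr(\G) \mid I_{F\ot v}\cap I_{F\ot v'}\neq\emptyset\}.
\]
Here the intersection of multisets is nonempty exactly when some $w$ occurs in both, so multiplicities play no role in this definition. The relation is symmetric, hence $f\inv(v)=f(v)$. If I can find $N$, depending only on $F$, with $|f(v)|\le N$ for every $v$, then Lemma~\ref{lem:multi-katetov} gives a partition $\Irr(\G)=\bigsqcup_{i=1}^{2N+1}Y_i$. In it, $v'\notin f(v)$ for distinct $v,v'$ in the same $Y_i$, which is exactly $I_{F\ot v}\cap I_{F\ot v'}=\emptyset$. (That $v\in f(v)$ is harmless, since the lemma only constrains distinct pairs.)

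The first step is to locate $f(v)$ with Frobenius reciprocity. Suppose $w\subset u\ot v$ and $w\subset u'\ot v'$ with $u,u'\in F$. From $w\subset u'\ot v'$ we get $v'\subset \conj{u'}\ot w$, and then $v'\subset \conj{u'}\ot u\ot v$. Reciprocity is available through the solutions $t_u,s_u$ of the conjugate equations \eqref{eq:conjugate}, which give $\Mor(w,u'\ot v')\cong\Mor(\conj{u'}\ot w, v')$. Hence
\[
f(v)\subset \bigcup_{u,u'\in F}\{v' \mid v'\subset \conj{u'}\ot u\ot v\}.
\]
It therefore suffices to bound, uniformly in $v$, the number of distinct irreducible components of $x\ot v$, where $x$ ranges over the finitely many representations $\conj{u'}\ot u$.

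The main obstacle is this uniform bound, because $\dim v$ is unbounded in general. I do not want to use the low dual assumption here, since the lemma is stated for an arbitrary $\G$. My first attempt uses quantum dimensions $d(\cdot)$, which are additive on direct sums and multiplicative on tensor products. Write $x\ot v\cong\bigoplus_z m_z z$. The number of distinct components is at most $\sum_z m_z\le\sum_z m_z^2=\dim\Mor(x\ot v,x\ot v)$. By reciprocity this dimension equals $\dim\Mor(v,\conj{x}\ot x\ot v)$, the multiplicity $m$ of $v$ in $y\ot v$ with $y=\conj x\ot x$. Comparing quantum dimensions gives $m\,d(v)\le d(y\ot v)=d(y)d(v)$, so $m\le d(y)=d(x)^2$.

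With this bound, $|f(v)|\le N:=\sum_{u,u'\in F} d(u)^2d(u')^2$, which is finite and independent of $v$. The previous paragraph shows the same bound holds for $f\inv(v)=f(v)$. The hypotheses of Lemma~\ref{lem:multi-katetov} are then met, and the partition follows as described in the first paragraph.
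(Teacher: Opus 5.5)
Your proof is correct and is essentially the paper's argument. Both apply Lemma~\ref{lem:multi-katetov} to a relation on $\Irr(\G)$ that is controlled, via Frobenius reciprocity, by the irreducible components of $\conj{u'}\ot u\ot v$ with $u,u'\in F$, together with a bound on the number of components of $x\ot v$ that is uniform in $v$. The differences are only in packaging. You take $f$ to be the symmetric conflict relation itself, so the bound on $f\inv$ is automatic and the lemma's conclusion is literally the claim. The paper instead defines $f(v)$ as the set of components of $\conj w\ot y\ot v$, bounds $f$ and $f\inv$ separately, and recovers disjointness by a final reciprocity argument. You also prove the uniform bound $\sum_z m_z\le d(x)^2$ by a self-contained quantum-dimension argument, where the paper cites \cite[Proposition 2.7.4]{neshveyev-tuset:book}.
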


\begin{proof}
Define a multivalued function
\[
f: \Irr(\G) \to \Irr(\G), \quad
f(v ) = \{  u\in\Irr(\G)  \mid
           u\subset \conj{ w }\ot  y\ot v  \text{ for some } w ,  y\in F \}.
\]
If $ u\in f(v )$, then $ u\in I_{z\ot v }$ for some irreducible 
$z\subset \conj{ w }\ot  y$ for some $ w ,  y\in F$. 
It follows from \cite[Proposition 2.7.4]{neshveyev-tuset:book} that
$|I_{z\ot v }|$ is bounded by a constant $C_z$, which depends on $z$
but not on $v $.
As $z\subset \conj{ w }\ot  y$ for some $ w,  y\in F$,
the representation $z$ is taken from a fixed finite set.
It follows that $|f(v )|$ is bounded by a constant independent of $v $. 

On the other hand, 
if $ u\in f\inv(v )$, then $v \subset z\ot  u$
for some irreducible $z\subset \conj{ w }\ot  y$ with $ w ,  y\in F$. 
Then, by the Frobenius reciprocity \cite[Theorem~2.2.6]{neshveyev-tuset:book},
$ u\subset \conj{z}\ot v $. Similarly as above, we find an upper
bound for $|f\inv(v )|$ which is independent of $ v$. 

Now we may apply Lemma~\ref{lem:multi-katetov} to the function $f$
and obtain a partition
\[
\Irr(\G) = \bigsqcup_i Y_i
\]
such that
\begin{equation} \label{eq:lemma}
v \notin \{  u\in\Irr(\G)  \mid
    u\subset \conj{ w }\ot  y \ot  v'  \text{ for some } w ,  y \in F\}
\end{equation}
for any distinct  $v , v '\in Y_i$. 
Now suppose that $v , v '\in Y_i$ are distinct and
\[
 u\in I_{F\ot v }\cap I_{F\ot v '}. 
\]
Then $u \subset  w \ot v $ and $ u\subset  y \ot v '$ for
some $ w ,  y \in F$. Hence
$v \subset \conj{ w }\ot u \subset \conj{ w }\ot  y \ot v '$.
This is a  contradiction with~\eqref{eq:lemma}. 
\end{proof}

Recall that a discrete quantum group $\Gd$ is said to have a \emph{low dual}
if there is a universal bound on the dimensions of
the irreducible representations of $\G$, where $\G$ is the dual quantum group of~$\Gd$.

\begin{theorem}
  Every discrete quantum group with a low dual
  acts freely on its universal compactification.
\end{theorem}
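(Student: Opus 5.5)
We must show that the image of $\Phi(a\ot b)=\cop(a)(1\ot b)$ is strictly dense in $M(c_0(\Gd)\ot\ellinf(\Gd))$. Let $X$ be the strict closure of the image; $X$ is a right $1\ot\ellinf(\Gd)$-module. As in the classical proof, it suffices to show that $p_u\ot 1\in X$ for every $u\in\Irr(\G)$, or more generally that $m\ot 1\in X$ for every matrix unit $m\in B(H_u)$. Then $m\ot h\in X$ for all $h\in\ellinf(\Gd)$, hence $c_0(\Gd)\odot\ellinf(\Gd)\subset X$, and this set is strictly dense. Since $c_0(\Gd)$ is a $c_0$-sum of the blocks $B(H_u)$, strict convergence in $M(c_0(\Gd)\ot\ellinf(\Gd))$ amounts to norm convergence after multiplying by $p_u\ot 1$ for each fixed $u$ from a finite set $K$. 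So the goal is a net $g_F$ in the image of $\Phi$ with $(p_w\ot1)g_F=(p_w\ot 1)(m\ot 1)$ for all $w$ in a given finite $K$, once $F\supset$ something determined by $K$ and $u$.

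The quantum analogue of $g_F=\sum_i\cop(1_{vA_i})(1\ot 1_{A_i})$ is built from Lemma~\ref{lem:partition}. I would take $F$ to be a finite set containing $\conj{w}$ for $w\in K$ (and $u$), and obtain a partition $\Irr(\G)=\bigsqcup_i Y_i$ such that the multisets $I_{F\ot v}$ are pairwise disjoint for distinct $v\in Y_i$. Writing $P_i=\sum_{v\in Y_i}p_v$, I would set
\[
g_F=\sum_i\sum_{k}\cop(a_{i,k})(1\ot b_{i,k}),
\]
where $b_{i,k}$ are supported on $Y_i$ and $a_{i,k}$ are supported on $I_{u\ot Y_i}$-type sets. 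Computing $(p_w\ot p_v)\cop(a)(1\ot b)$ via the explicit formula $\cop(a)=\sum\sum\sum_T Tap_xT^*$ gives a sum over $x\subset w\ot v$ of $T a p_x T^* (1\ot b p_v)$. The idea is to take $a$ supported on $I_{u\ot v}$ and use the conjugate-equation elements $t_u,s_u$ together with~\eqref{eq:j} to choose, block by block, $a$ and $b$ so that for $w=u$ the sum reproduces $m\ot p_v$ (a Frobenius-reciprocity ``cancellation'' $\sum_T T T^*=1$ on $u\ot v$), while for $w\neq u$ with $w\in K$ the contributions vanish. This vanishing is exactly where the disjointness from Lemma~\ref{lem:partition} enters: for $v,v'$ in the same $Y_i$ the blocks $I_{F\ot v}$ and $I_{F\ot v'}$ do not overlap, so the block of $a$ designed for $v$ does not contaminate the $v'$ column. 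Because each $Y_i$ is arbitrary, the $a_{i,k}$ are infinite products in $\ellinf(\Gd)$, which is allowed.

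The main obstacle is the noncommutative bookkeeping: a single product $\cop(a)(1\ot b)$ restricted to $p_w\ot p_v$ lands in $B(H_w\ot H_v)$, and producing $m\ot 1_v$ there requires a sum of such products with $b\in B(H_v)$ varying over a basis. The number of terms must be bounded uniformly in $v$, which is where the low dual hypothesis is used: with $\dim H_v\le D$ for all $v$, one fixes finitely many matrix-unit positions (at most $D^2$) and takes, for each position and each $i$, elements $b_{i,k}=\prod_{v\in Y_i}e^{v}_{k}$ and corresponding $a_{i,k}$, so that $g_F$ is a finite sum. I would first try to verify the $w=u$ identity by writing $m\ot 1_v=\sum_{x\subset u\ot v}\sum_T T T^*(m\ot 1_v)$ and expressing $T^*(m\ot e_{kl})$ through~\eqref{eq:conjugate} as $T^*(1\ot e_{k\cdot})$ times an element of $B(H_x)$, then check uniform boundedness of the $a_{i,k}$ (norms controlled by $D$) so the sums define elements of $\ellinf(\Gd)$.
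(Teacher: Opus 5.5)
Your skeleton matches the paper's: reduce to $c_u\ot 1$ with $c_u$ supported at $u$, glue the columns $v\in Y_i$ via Lemma~\ref{lem:partition}, and use the low dual to bound the number of terms. However, the step that makes each single column work is missing. You claim that the $(p_w\ot p_v)$-blocks with $w\ne u$ vanish because of the disjointness in Lemma~\ref{lem:partition}. That lemma only prevents interference between \emph{different} columns $v\ne v'$ of the same $Y_i$. Inside one column nothing is disjoint: a block $ap_x$ with $x\subset u\ot v$ reappears, through every isometry $T:H_x\to H_w\ot H_v$, in every row $w$ with $x\subset w\ot v$, and $I_{u\ot v}\cap I_{w\ot v}\neq\emptyset$ is typical. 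So a choice designed only to make the $(u,v)$-block equal to $m\ot 1_v$ (your $\sum_T TT^*=1$ idea) fails in general. For instance, take $u=\1$, $m=p_\1$ and $a=b=p_v$. The $(p_\1\ot p_v)$-block of $\cop(a)(1\ot b)$ is $1_v$. But the $(p_w\ot p_v)$-block is $\sum_{T:v\subset w\ot v}TT^*\neq 0$ for every $w\ne\1$ with $w\subset v\ot\conj v$, and such $w$ exist as soon as $\dim H_v\ge 2$. No partition of $\Irr(\G)$ can remove this, since it happens inside a single column.

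The missing idea is to take, in each column, the preimage of $c_u\ot p_v$ under $a\ot b\mapsto\cop(a)(1\ot b)$, which is given by the antipode. Write $\cop(c_u)(1\ot p_{\conj v})=\sum_k A^v_k\ot B^v_k$; Frobenius reciprocity puts $A^v_k$ in blocks $w^v_k\subset u\ot v$. Then set $a_kp_{w^v_k}=A^v_k$ and $b_kp_v=j(B^v_k)$. Coassociativity, \eqref{eq:j}, the fact that $T^*t_v\in\Mor(\1,s)$ vanishes unless $s=\1$, the counit property and \eqref{eq:conjugate} together yield $\sum_k\cop(A^v_k)(p_w\ot p_v)(1_w\ot j(B^v_k))=c_up_w\ot 1_v$ for all $w$ simultaneously. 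This is what kills the rows $w\neq u$; in the example above (with $\rho_v=1$) it replaces $p_v\ot p_v$ by $\frac{1}{\dim H_v}\sum_{i,j}e_{ij}\ot e_{ji}$. With this choice, the uniform bound on the $a_k,b_k$ needs $j$ to be uniformly bounded. The paper gets this from the Kac property of low duals (Krajczok--So\l tan), not directly from $\dim H_v\le D$.

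There are also two smaller slips. First, the cross-column argument needs $w\in F$ rather than $\conj w\in F$, since $x\subset w\ot v$ and $x\subset u\ot v'$ give $v\subset\conj w\ot u\ot v'$. Second, passing from blockwise equality on $p_w\ot 1$, $w\in K$, to strict convergence requires the net $g_F$ to be bounded. This holds because $g_F(1\ot p_v)$ only involves the piece $Y_i$ containing $v$.
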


\begin{proof}
  Fix a discrete quantum group $\Gd$ with a low dual, and denote the dual quantum group of $\Gd$ by $\G$. 
  Let $ u\in \Irr(\G)$ and let $c_ u\in \ellinf(\Gd)$ be
  supported by $\{ u\}$.  For every $v \in \Irr(\G)$, we have
  \[
  \cop(c_ u)(1\ot p_{\conj v }) = \sum_{ w \in \Irr(\G)}
  \sum_{T:  u\subset  w \ot \conj v } Tc_u p_u T^*
  \]
  where the sum runs through the multiplicities of $ u$ in
  $ w \ot  \conj v $ and   $T: H_ u\to H_ w \ot H_{\conj v }$
  denotes an associated isometric intertwiner. 
  Since $ u\subset  w \ot \conj v $ we have that $ w \subset  u\ot v $ by
  the Frobenius reciprocity
  \cite[Theorem~2.2.6]{neshveyev-tuset:book}.
  As noted in the proof of Lemma~\ref{lem:partition}, the cardinality
  of the set  
  \[
  I_{ u\ot v } = \{  w \in \Irr(\G) \mid  w \subset  u\ot v  \} 
  \]
  is bounded by a constant $K_ u$ depending only on $ u$ and not $v $. 
 We may then write
  \begin{equation} \label{eq:elementary}
  \cop(c_ u)(1\ot p_{\conj v }) = \sum_{k} A_k^{v }\ot B_k^{v }
  \end{equation}
  where   $A_k^v  \in B(H_{ w^v _k})$ and $B_k^{v }\in B(H_{\conj v })$ 
  are such that $\|A_k^ v\| \le \|c_ u\|$ and $\|B_k^ v\| \le 1$.
Since $\Gd$ has a low dual and $|I_{ u\ot v }|\le K_u$, 
the number of summands in \eqref{eq:elementary} 
can be bounded by a universal constant $N$. 
By adding zero terms if necessary, we may assume that the sum runs from $1$ to $N$, for every $ v$.

Let $\F$ be the collection of all finite subsets of
$\Irr(\G)$ and suppose that $F\in \F$ is fixed and $ u\in F$.
Apply Lemma~\ref{lem:partition} to obtain a finite partition
\[
\Irr(\G) = \bigsqcup_i Y_i
\]
such that 
\begin{equation} \label{eq:disjoint}
I_{F\ot v } \cap I_{F\ot v '} =\emptyset
\end{equation}
for any distinct  $v , v '\in Y_i$.
Fix $i$ for now.
In particular, $I_{ u\ot v }\cap I_{ u\ot v '}=\emptyset$
for distinct  $v , v '\in Y_i$,  
so we can define $a_k\in \ellinf(\Gd)$, $k=1, 2, \ldots, N$, by
\[
a_k p_s = \begin{cases}
  A_k^{v } & \text{if $s =  w _k^v $ for some $v \in Y_i$}\\
  0 & \text{otherwise.}
  \end{cases}
\]
Since $\Gd$ has low dual, it is of Kac type by
\cite{kra-sol}.
Hence the maps $j$ are isometries, and we can define $b_k\in \ellinf(\Gd)$, $k=1, 2, \ldots, N$, by
\[
b_k p_s = \begin{cases}
  j(B_k^{v }) & \text{if $s = v $ for $v  \in Y_i$}\\
  0 & \text{otherwise}.
  \end{cases}
\]

Given $w\in F$, we have  
\begin{align*}
  & \sum_{k=1}^N \cop(a_k) (1\ot b_k) (p_w \ot 1)
  = \sum_{k=1}^N \sum_{v \in Y_i} \sum _{T:s\subset w\ot v } 
     Ta_k p_sT^*(1_w\ot j(B_k^{v })).
\end{align*}
Suppose that $a_k p_s\ne 0$ for $s\subset w \ot v $.
Then $s =  w _k^{v '}$  for some $v '\in Y_i$ and $a_k p_s = A_k^{v '}$.
Now $s =  w _k^{v '} \subset u \ot v '$. 
Since $w$ and $ u$ are in $F$, it follows from
\eqref{eq:disjoint} that $v ' = v $. Therefore, 
\begin{equation}\label{eq:decomp}
  \begin{split}
  \sum_{k=1}^N \cop(a_k) (1\ot b_k) (p_w \ot 1)
  &= \sum_{k=1}^N
  \sum_{v \in Y_i} \sum _{T: w _k^v \subset w\ot v } T A_k^{v }
  T^*(1_w\ot j(B_k^{v }))\\
  &= \sum_{v \in Y_i}
  \sum_{k=1}^{n_v}
  \bigl(\cop(A_k^{v })(p_w\ot p_v )\bigr) (1_w\ot j(B_k^{v })).
  \end{split}
\end{equation}

Let $t_v \in \Mor(1, v \ot \conj v )$  as fixed earlier.
By \eqref{eq:j} we have
\begin{align*}
  &\sum_{k=1}^{n_v }
  \Bigl(\bigl(\cop(A_k^{v })(p_w\ot p_v )\bigr)\bigl(1_w\ot  j(B_k^{v })\bigr)\ot 1_{\conj v }\Bigr)
  (1_w\ot t_v )
  =\sum_{k=1}^{n_v } (\cop(A_k^v )(p_w\ot p_v )\ot B_k^{ v
  })(1_w\ot t_v ). 
\end{align*}
Now
\begin{align*}
\sum_{k=1}^{n_v } (\cop(A_k^v )(p_w\ot p_v )\ot B_k^{v })
&= (\cop\ot \id)(\cop(c_ u))(p_w\ot p_v \ot p_{\conj v })  
= (\id\ot \cop)(\cop(c_ u))(p_w\ot p_v \ot p_{\conj v }).
\end{align*}
Write 
\[
\cop(c_ u)(p_w\ot 1) = \sum_\ell C_\ell \ot d_\ell
\]
where $C_\ell\in B(H_w)$, $d_\ell\in \ellinf(\Gd)$ and the sum is finite.
Continuing the earlier calculation we have 
\begin{align*}
  \sum_{k=1}^{n_v }
  \Bigl(\bigl(\cop(A_k^{v })(p_w\ot p_v )\bigr)\bigl(1\ot  j(B_k^{v })\bigr)\ot 1_{\conj v }\Bigr)(1_w\ot t_v ) 
  =  \sum_\ell \sum _{T:s\subset v \ot \conj v }
  (C_\ell \ot T d_\ell p_s T^*)(1_w\ot t_v ).
\end{align*}
Now $T^*t_v \in \Mor(\1, s)$ and $s$ is irreducible, so
$T^*t_v  = 0$ unless $s = \1$. In this case we have
$T = \lambda t_ v$ where $\lambda\in\C$ with $|\lambda|=\|t_ v\|\inv$,
and so $Td_\ell p_\1 T^*t_v  = \epsilon(d_\ell)t_v $.
It follows that
\begin{align*}
& \sum_{k=1}^{n_v }
  \Bigl(\bigl(\cop(A_k^{v })(p_w\ot p_v )\bigr)\bigl(1\ot  j(B_k^{v })\bigr)\ot 1_{\conj v }\Bigr)(1_w\ot t_v ) 
 =  \sum_\ell (C_\ell  \ot \epsilon(d_\ell) 1_v  \ot 1_{\conj v })(1_w\ot t_v )\\
  &\qquad =
  (\id\ot \epsilon)\bigl(\cop(c_ u)\bigr)p_w \ot 1_v  \ot 1_{\conj v })(1_w\ot t_v )
  = (c_u p_w \ot 1_v  \ot 1_{\conj v })(1_w\ot t_v ).
\end{align*}
Since $(1_v \ot s_v ^*)(t_v  \ot 1_v ) = 1_ v$, it follows that
\[
  \sum_{k=1}^{n_v }
  \bigl(\cop(A_k^{v })(p_w\ot p_v )\bigr)\bigl(1_w\ot  j(B_k^{ v
  })\bigr)
  = c_u p_w \ot 1_v .
\]
Inserting this into equation \eqref{eq:decomp}, we have
\[
\sum_{k=1}^N \cop(a_k) (1\ot b_k) (p_w \ot 1)
= \sum_{v \in Y_i} c_u p_w \ot p_v  = (c_u \ot p_{Y_i})(p_w\ot 1)
\]
where $p_{Y_i} = \sum_{v\in Y_i} p_v$. 
Since $w \in F$ is arbitrary, it follows that
\[
\biggl(\sum_{k=1}^N \cop(a_k) (1\ot b_k)\biggr)(f\ot 1)
= (c_u  \ot p_{Y_i})(f\ot 1) 
\]
for every $f$ supported by $F$. Repeating the construction
for each $i$ and denoting the resulting elements by
$a^i_k$ and $b_k^i$, put  
\[
X_F = \sum_i \sum_{k=1}^N \cop(a^i_k) (1\ot b^i_k).
\]
Since the net $(X_F)_{F\in \F}$ is bounded,
it follows from the above that
$X_F\to c_ u\ot 1$ strictly in $M(c_0(\Gd)\ot \ellinf(\Gd))$. 
This in turn implies that 
the linear span of
$\cop(\ellinf(\Gd))(1\ot \ellinf(\Gd))$
is stricly dense in $M(c_0(\Gd)\ot \ellinf(\Gd))$.

\end{proof}


\end{document}